\newcommand\bp{{\mathbb P}}
\newcommand\bc{{\mathbb C}}
\newcommand\ca{{\mathcal A}}
\newcommand\calF{{\mathcal F}}
\newtheorem{thm}{Theorem}
\newtheorem{conj}{Conjecture}
\newtheorem{lema}[thm]{Lemma}
\theoremstyle{remark}
\theoremstyle{definition}
\newtheorem{dfn}[thm]{Definition}
\newtheorem{ejm}[thm]{Example}
\numberwithin{equation}{section}
\numberwithin{figure}{section}
\title{Isotropic subspaces of Orlik-Solomon algebras}
\author{Miguel A. Marco-Buzun\'ariz}
\thanks{Research partially supported by the ERC Starting Grant project TGASS and
by Spanish Contract MTM2007-67908-C02-02. The author thanks to the Facultad de
Ciencias Matem\'aticas of the Universidad Complutense de Madrid and the
Universidad de Zaragoza for excellent working conditions.}
\begin{document}

\begin{abstract}
We give a combinatorial characterization of isotropic subspaces in the Orlik-Solomon algebra of a hyperplane arrangement in terms of decorations of its intersection lattice.
We then use this characterization to prove a result that relates these isotropic subspaces with linear systems supported on the arrangement, for arrangements with isolated non-normal crossings of a particular form.
\end{abstract}

\maketitle

\section{Introduction and first definitions}
The relationship between resonance varieties of line arrangements and pencils supported in them has been widely studied (see for instance \cite{falk-yuzvinsky-multinets}, \cite{libgober-yuzvinsky}, \cite{mio-graphscomb} or \cite{yuzvinsky-nets} among others). From all these works, we understand quite well the correspondence between these objects. But so far, no generalization to higher dimensions is known. One possible explanation for this is the fact that, in dimension two, components of the resonance varieties are the same than $2$-isotropic subspaces, but that is not true in higher dimensions. On the other hand, there are some similar results that relate isotropic subspaces of the cohomology ring with linear systems for compact Kaehler manifolds (see~\cite{catanese-moduli}). So it sounds natural to consider isotropic subspaces as the good generalization for the aforementioned results in line arrangements. In this paper, we give a first result in that direction: for a hyperplane arrangement whose non normal crossings are isolated points of the simplest type, then the isotropic subspaces of its Orlik-Solomon algebra correspond to linear systems supported in the arrangement. In order to do so, first we give a combinatorial criterion for a subspace to be isotropic (the so-called \emph{flag condition}), which is valid for any arrangement; and then we use this condition to prove the statement by induction using the extra conditions on the combinatorics of the arrangement. 

We conjecture that the following result holds for every arrangement:
\begin{conj}
\label{conjgen}
Let $A$ be a hyperplane arrangement in $\bc\bp^n$, and $V$ an $n$-isotropic subspace of its OS-algebra of dimension $d\geq n$ not contained in any coordinate hyperplane. Then the arrangement is a union of $d+1$ fibres of a linear system of hypersurfaces of dimension $n-1$.
\end{conj}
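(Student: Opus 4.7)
The plan is to combine the flag condition (the combinatorial criterion for isotropy alluded to in the introduction) with an induction on the ambient dimension $n$. The base case $n=2$ is the known correspondence between $2$-isotropic subspaces of the Orlik--Solomon algebra of a line arrangement and pencils/multinets supported on it, as proved in \cite{falk-yuzvinsky-multinets}, \cite{libgober-yuzvinsky} and \cite{yuzvinsky-nets}.

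For the inductive step, I would first translate the hypothesis that $V$ is $n$-isotropic into a decoration of the intersection lattice $L(A)$ via the flag condition. The bound $\dim V = d\geq n$ together with the non-containment of $V$ in any coordinate hyperplane should ensure that this decoration is rich enough at the maximal flags of $L(A)$ to enforce, at every flat of codimension $n$, a partition of the hyperplanes through that flat into blocks; these blocks are the candidate fibres of the sought linear system. Next, I would slice $\bc\bp^n$ by a generic hyperplane $H_0 \notin A$, verify that the flag condition descends to an $(n-1)$-isotropic subspace $V'$ of the Orlik--Solomon algebra of $A \cap H_0$ still of dimension $d$, and apply the induction hypothesis to obtain, on each such $H_0$, a linear system of dimension $n-2$ with $d+1$ distinguished fibres covering $A\cap H_0$. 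Letting $H_0$ vary and exploiting the combinatorial rigidity provided by the flag condition should then allow one to glue these slice-wise linear systems into a single global linear system of dimension $n-1$ on $\bc\bp^n$ whose $d+1$ distinguished fibres are exactly the hyperplanes of $A$.

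The step I expect to be the main obstacle is precisely this global gluing. The flag condition is a local condition along chains of flats, whereas the conclusion is global and geometric: one has to show that the base locus of the would-be linear system actually lies inside $A$, and that the pencils on different generic slices $H_0, H_0'$ agree on $H_0 \cap H_0'$. This coherence does not seem to follow from the flag condition alone, which is presumably why the paper only establishes the analogous theorem under the extra hypothesis that the non-normal crossings of $A$ are isolated of the simplest form --- in that restricted setting the interactions between different codimension-$n$ flats are controlled enough for the induction-and-glue strategy to succeed. In the general case, removing the hypothesis will probably require an additional ingredient, such as a Hodge-theoretic construction of the linear system in the spirit of Catanese's result \cite{catanese-moduli}, or a strengthening of the flag condition that simultaneously controls all codimensions rather than one chain at a time.
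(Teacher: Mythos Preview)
The statement you are attempting is labelled a \emph{Conjecture} in the paper and is not proved there; the paper only establishes the special case where $d=n$ and every non-normal crossing is an isolated point lying on exactly $n+1$ hyperplanes. So there is no ``paper's own proof'' of this statement to compare against, and your honest acknowledgement that the gluing step is the obstacle is in line with the paper's own assessment that its techniques ``don't seem to be powerful enough'' for the general case.

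That said, your inductive step contains a concrete error that is independent of the gluing issue. Slicing by a \emph{generic} hyperplane $H_0\notin A$ does not turn an $n$-isotropic subspace $V\subseteq A^1$ into an $(n-1)$-isotropic subspace of the Orlik--Solomon algebra of $A\cap H_0$. For generic $H_0$ the Orlik--Solomon algebra of the slice is just the truncation of $A$ at degree $n-1$, so the image of $V$ in degree one is $V$ itself, and the vanishing of $\bigwedge^n V$ says nothing about $\bigwedge^{n-1}V$. Equivalently, the flag condition for $n$-isotropy constrains chains of length $n$, whereas $(n-1)$-isotropy of the slice requires the stronger constraint on chains of length $n-1$; the former does not imply the latter (already for $n=2$ a nonzero $2$-isotropic $V$ is never $1$-isotropic).

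The paper's proof of the restricted theorem avoids this by restricting to a hyperplane $H_0$ \emph{in} $\ca$ and replacing $V$ by its quotient $V/\langle v_0\rangle$, which drops the dimension from $n$ to $n-1$; the flag condition for $V$ then genuinely implies the flag condition for this quotient in the restriction arrangement, and the induction hypothesis applies with $d=n-1$. The price is that one must then reconcile the linear systems obtained from the various $H_j$ with $v_j\in\bc v_0$, and it is exactly here that the INNC hypothesis (at most $n+1$ hyperplanes through each special point) is used to force the coefficients $a_i^j$ to agree and the block sizes to match. If you want to pursue an inductive strategy for the general conjecture, it should be modelled on this ``restrict to $H_0\in\ca$ and quotient $V$'' step rather than on generic slicing.
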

The techniques used in this paper don't seem to be powerful enough to prove
it. On the other hand, there is a class of arrangements for which we have hope
that this approach could be fruitful: the arrangements with isolated
non-normal crossings or INNC's, introduced by Libgober in~\cite{libgober-innc}.
In this setting, the previous conjecture can be stated in a much more precise
form:

\begin{conj}
Let $A=\{H_1,\ldots,H_l\}$ be a hyperplane arrangement in $\bc\bp^n$ with
isolated non normal crossings (INNC), and $V$ an $n$-isotropic subspace of its
OS-algebra of dimension $d\geq n$ not contained in any coordinate hyperplane.
Let $H_i$ be defined by the linear form $\alpha_i$. Then there exists a
partition $\Pi=\Pi_1,\ldots,\Pi_{d+1}$ of the set of hyperplanes of $A$, and a
choice of integer weights $w_i>0$ such that the equations 
\[
\left\{\prod_{H_i\in \Pi_j} \alpha_i^{w_i}\mid j=1,\ldots,{n+1}\right\}
\]
are linearly dependent.
\end{conj}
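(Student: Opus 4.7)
The plan is to convert the $n$-isotropy of $V$ into combinatorial decorations of the intersection lattice via the flag condition, exploit the INNC hypothesis to localise the essential content at the finitely many non-normal crossings, and finally reconstruct a pencil of hypersurfaces whose fibres realise the partition $\Pi$ and the weights $w_i$. The argument would proceed by induction on $n$, with the line arrangement case $n=2$ (where INNC is automatic) serving as the base via the classical multinet/pencil dictionary of \cite{falk-yuzvinsky-multinets} and \cite{yuzvinsky-nets}.

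First, I would apply the flag condition to $V$: for each maximal flag of flats of $A$ it assigns to every hyperplane through the flag a rational weight and a block label, and the compatibility of these decorations under elementary flag modifications should force the weights and labels to be global invariants of the hyperplanes, yielding a partition $\Pi = \Pi_1 \sqcup \cdots \sqcup \Pi_{d+1}$ together with integer weights $w_i > 0$ (after clearing denominators). At flats consisting only of normal crossings the flag condition is essentially automatic, so the combinatorial content is concentrated at the INNC points. A local-to-global analysis at each such point $p$ then uses the link arrangement $A_p$ and the restricted isotropic subspace $V_p$ (which should be $(n-1)$-isotropic in the OS-algebra of $A_p$) together with the inductive hypothesis to extract local partitions and weights at $p$; the INNC hypothesis ensures that these finitely many local data glue without obstruction into the global $(\Pi, w)$.

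To verify the asserted linear dependence, I would consider the rational map $f : \bc\bp^n \to \bc\bp^d$ whose components are the monomials $f_j = \prod_{H_i \in \Pi_j} \alpha_i^{w_i}$. The logarithmic forms $d\log f_j$ span a subspace of $\Hh^1(\bc\bp^n\setminus A)$ closely related to $V$, and the $n$-isotropy of $V$ should translate, in the spirit of \cite{catanese-moduli}, into the image of $f$ being contained in a linear subspace of $\bc\bp^d$ of dimension at most $n-1$, which is equivalent to the claimed linear dependence of any $n+1$ of the $f_j$.

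The main obstacle is this last translation from algebra to geometry: $n$-isotropy in the OS-algebra gives only formal vanishings of products, and one must upgrade these to the geometric statement that the image of $f$ has small dimension. For line arrangements the multinet/pencil correspondence handles this directly; in higher dimension the OS-algebra is not the full cohomology of the complement, and $n$-isotropic subspaces of the OS-algebra can a priori be larger than those of $\Hh^{\bullet}(\bc\bp^n \setminus A)$. The INNC hypothesis should close this gap, as it controls precisely the intersections where the two might disagree, which is presumably why the present paper only establishes the conjecture in the simplest INNC case.
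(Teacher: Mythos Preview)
The statement you are attempting to prove is presented in the paper as a \emph{conjecture}, not a theorem; the paper contains no proof of it, and says explicitly that ``the techniques used in this paper don't seem to be powerful enough to prove it.'' What the paper does prove (the Theorem in the final section) is only the very special case where every non-normal crossing is a single point lying on exactly $n+1$ hyperplanes, and $d=n$. So there is no proof in the paper to compare your proposal against.

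Your sketch does share the inductive skeleton of the paper's special-case argument: induction on $n$, base case $n=2$ via the multinet/pencil dictionary, restriction to a hyperplane to drop dimension, and use of the flag condition to localise the problem at the non-normal crossings. However, the paper's proof of that special case does not go through a Catanese-style geometric argument about the image of a rational map. Instead, having restricted to each hyperplane $H_j$ in one block of the partition, it obtains divisibility statements $\alpha_j \mid a_1 f_1 + \cdots + a_n f_n$, checks that the coefficient vectors $(a_1,\ldots,a_n)$ agree across all such restrictions, and finishes with a purely combinatorial degree count (a pairing argument along a generic line in the arrangement) forcing $|\Pi_n|=|\Pi_{n+1}|$.

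As a proof of the general conjecture, your proposal has a genuine gap, which you essentially acknowledge in your final paragraph: you have no mechanism to pass from $n$-isotropy in the OS-algebra to the geometric statement that the image of $f$ lies in a linear subspace of dimension at most $n-1$. Two further points: your assertion that the flag condition ``assigns to every hyperplane through the flag a rational weight and a block label'' is not justified --- the flag condition only gives rank-drop of certain matrices of column-sum decorations, and extracting from this a global partition with integer weights is precisely the hard content that remains open. And your remark that ``the OS-algebra is not the full cohomology of the complement'' is incorrect: by the Orlik--Solomon theorem it is exactly $H^{\bullet}(\bc\bp^n\setminus A;\bc)$, so the difficulty you are pointing at does not lie there.
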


This second conjecture resembles the results known for line arrangements; and in fact the structure given by the partition $\Pi$ and the weights $w_i$ would be a direct generalization of the concept of \emph{multinet} or \emph{combinatorial pencil} presented in \cite{falk-yuzvinsky-multinets} and \cite{mio-graphscomb} respectively. From this point of view, the result presented in this paper could be seen as a generalization of the results known for \emph{nets} in line arrangements (as studied in \cite{yuzvinsky-nets}).

 Let us give precise definitions of the objects involved, and precise formulations of the statements. Most of the following definitions are classical in the theory of hyperplane arrangements (see~\cite{orlik-terao}).

Given a hyperplane arrangement $\ca=\{H_1,\ldots,H_l\}$ in $\bc\bp^n$, the cohomology ring of its complement is referred to as the Orlik-Solomon algebra of the arrangement. It is known to be isomorphic to the quotient $E/K$, where $E$ is the exterior algebra of $A^1:=\bc^l$ and $K$ is the ideal generated by
\[
\{\partial(x_{i_1}\wedge \cdots \wedge x_{i_m}):=\sum_{j=1}^m (-1)^j
x_{i_1}\wedge\cdots\wedge x_{i_{j-1}}\wedge x_{i_{j+1}}\wedge \cdots \wedge
x_{i_m}\mid H_{i_1},\cdots,H_{i_m} \text{intersect non generically}\}\]
and\[
\{x_{i_1}\wedge \cdots \wedge x_{i_m}\mid H_{i_1}\cap\cdots\cap H_{i_m}=\emptyset\},
\]
where $x_1,\cdots,x_l$ are the canonical generators of $E$ (one corresponding to each hyperplane).

This algebra is generated by $A^1$ and graded. We will denote by $A^i$ the homogeneous part of degree $i$.

Given an ordering in the hyperplanes $H_1,\ldots,H_l$ of the arrangement, the OS algebra has a particular basis (as a vectorspace) given by the set of monomials $x_{i_1}\wedge\cdots \wedge x_{i_k}$ such that:
\begin{itemize}
\item $k\leq n$
\item $i_1<\cdots<i_k$
\item the linear forms $\alpha_1\cdots,\alpha_k$ that define the hyperplanes $H_{i_1},\cdots ,H_{i_k}$ are linearly independent.
\item $H_{i_j}$ is the first hyperplane that contains $H_{i_j}\cap H_{i_{j+1}}\cap\cdots\cap H_{i_k}$ for all $1\leq j\leq k$.
\end{itemize}
This basis is called the \emph{Non Broken Circuit basis} (or NBC basis). In
particular, the elements of degree $j$ of this basis form a basis of $A^j$.

\begin{dfn}
Let $V$ be a linear subspace of $A^1$. We will say that $V$ is $k$-isotropic if $\bigwedge^k V=0$; that is, if the product of any $k$ vectors of $V$ vanishes.
\end{dfn}

It is easy to see that a $k$-isotropic subspace is also $k+1$ isotropic; and
that every subspace of dimension $k$ is $k+1$ isotropic. In this paper we will
be interested in $n$-isotropic subspaces of dimension at least $n$.

\section{Descendent well-fitted flags and the NBC basis}
We will consider a hyperplane arrangement in $\bc\bp^n$, and a fixed ordering in its hyperplanes. Let $A$ be its Orlik-Solomon Algebra. Consider $A^n$ its homogeneous part of degree $n$.

% It is easy to check that the 
% Non Broken Circuit basis of $A^n$ is given by the set of products $x_{i_1}\wedge\cdots\wedge x_{i_n}$ such that $H_{i_1}, \ldots , H_{i_n}$ are linearly independent, and each $i_k$ is the  index of the first hyperplane that contains $H_{i_k}\cap H_{i_{k+1}}\cap \cdots \cap H_{i_n}$. To see it just note that these products don't contain any broken circuit, and any such product that does not accomplish the properties must contain a broken circuit.
In the following we will consider flags embedded in the intersection lattice. That is, sequences of subspaces obtained by intersecting some hyperplanes of the arrangement.

\begin{dfn}
 A flag $F=(S_n\supset\cdots\supset S_1)$ is said to be \emph{descendent} if $j_{F,i+1}:=min\{j\mid S_{i+1}\subseteq H_j\}>j_{F,i}$ for any $i$.
\end{dfn}

This definition can be interpreted as follows. First we label each element of
the intersection lattice by the indices of the hyperplanes that contain it; and
then we see the flag as a walk in the intersection lattice that starts at a
hyperplane and ends at a point. We have then an increasing sequence of labels.
The condition of being descendent means that at each step we are adding some
index to the label that is smaller than all the previous ones.

\begin{ejm}
\label{ejemplo1}
 Consider a central arrangement of nine hyperplanes $H_1,\ldots,H_9$ in $\bc\bp^4$, whose homogeneous coordinates are given by the entries of the following system of equations:
\[
\left(x_1,x_2,x_3,x_4,x_5\right)\cdot
 \left(\begin{matrix}
        1 & 0 & 0 & 0 & 1 & 0 & 0 & 1 & 0\\
	0 & 1 & 0 & 0 & 1 & 1 & 0 & 0 & 0\\
	0 & 0 & 1 & 0 & 0 & 1 & 1 & 0 & 0\\
	0 & 0 & 0 & 1 & 0 & 0 & 1 & 1 & 0\\
	0 & 0 & 0 & 0 & 0 & 0 & 0 & 0 & 1\\
       \end{matrix}
\right) =\left(0\right).
\]

That is, there are eight hyperplanes that intersect in a point, and a ninth hyperplane that intersects generically.
We have the following non-generic intersections:
\begin{itemize}
 \item Codimension $1$ intersections: $H_1\cap H_2\cap H_5$, $H_2\cap H_3 \cap H_ 6$, $H_3\cap H_4\cap H_7$ and $H_1\cap H_4\cap H_8$.
\item Codimension $2$ intersections: $H_1\cap H_2\cap H_3 \cap H_5 \cap H_6$, $H_1 \cap H_2 \cap H_5 \cap H_7$, $H_3\cap H_4 \cap H_5 \cap H_7$, $H_2\cap H_3 \cap H_4 \cap H_6 \cap H_7$, $H_1 \cap H_2 \cap H_4 \cap H_5 \cap H_8$, $H_2 \cap H_3 \cap H_6 \cap H_8$, $H_1 \cap H_4 \cap H_6 \cap H_8$, $H_1\cap H_3 \cap H_4 \cap H_7\cap H_8$ and $H_5\cap H_6 \cap H_7 \cap H_8$.
\item One point (codimension $3$) that is the intersection of the first eight hyperplanes.
\end{itemize}

In this case, the flag 
\[
 (H_7\supset (H_3\cap H_4\cap H_7)\supset (H_2\cap H_3\cap H_4 \cap H_6 \cap
H_7)\supset (H_1\cap H_2\cap \cdots \cap H_8))
\]
is descendent, since the first index of each step is smaller than the previous one. On the other hand, the flag
\[
  (H_7 \supset (H_2\cap H_7)\supset (H_2\cap H_3\cap H_4 \cap H_6 \cap
H_7)\supset (H_1\cap H_2\cap \cdots \cap H_8))
\]
is not, because the first index of $H_2\cap H_7$ is the same than the one of $H_2\cap H_3\cap H_4 \cap H_6 \cap H_7$.
\end{ejm}

Note that, for each descendent flag $F=(S_n\supset \cdots \supset S_1)$, the
product $x_{j_{F,1}}\wedge \cdots \wedge x_{j_{F,n}}$ is an element of the Non
Broken Circuit basis. And vice-versa, if a monomial $x_{i_1}\wedge\cdots\wedge x_{i_n}$ is
an element of the NBC basis then the flag $F=(H_{i_n}\supset (H_{i_{n-1}}\cap
H_{i_n})\supset \cdots \supset (H_{i_1}\cap\cdots \cap H_{i_n}))$ is descendent
and $j_{F,k}$ is precisely $i_{k}$.

\begin{dfn}
 Let $L=(H_{i_1},\cdots,H_{i_n})$ a tuple of hyperplanes defined by linearly
independent $1$-forms, with $i_1<\ldots<i_n$. Let
$F=(S_n\supseteq\cdots\supseteq S_1)$ be a flag. We will say that $F$ is
\emph{well fitted} with respect to the tuple $L$ if there exists a permutation
$\sigma$ of the set $\{1,\ldots,n\}$ such that $S_k=\bigcap_{j=k}^n
H_{i_{\sigma(j)}}$ for each $k$. In such case, we define the relative
signature of $F$ with respect to the tuple $L$ as the signature of $\sigma$; it will be denoted by $sig_L(F)$.

The set of all descendent flags that are well fitted with respect to $L$ will be denoted by $Desc(L)$
\end{dfn}

The concept of well fitted means that the flag lies inside the intersection lattice of the subarrangement formed by the hyperplanes $H_{i_1},\cdots,H_{i_n}$.

\begin{lema}
\label{lemacoeficients}
 Let $L=(H_{i_1},\ldots,H_{i_n})$ be a linearly independent tuple. The expression of the corresponding element of the OS algebra in terms of the NBC basis is given by
\[
x_{i_1}\wedge\cdots\wedge
 x_{i_n}=(-1)^{{n}\choose{2}}
\sum_{F\in Desc(L)}sig_L(F)x_{j_{F,1}}\wedge \cdots \wedge x_{j_{F,n}}
\]	 
\end{lema}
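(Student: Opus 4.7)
The plan is to proceed by induction on $n$, using the Orlik--Solomon relations to reduce an arbitrary product to NBC form. The base case $n = 1$ is immediate. For the inductive step, I would factor
\[
x_{i_1}\wedge\cdots\wedge x_{i_n} = x_{i_1}\wedge(x_{i_2}\wedge\cdots\wedge x_{i_n})
\]
and apply the inductive hypothesis to the shorter tuple $L' = (H_{i_2},\ldots,H_{i_n})$. This expresses $x_{i_2}\wedge\cdots\wedge x_{i_n}$ as a signed sum of NBC monomials $x_{j_{F',1}}\wedge\cdots\wedge x_{j_{F',n-1}}$ indexed by $F' \in Desc(L')$. The remaining task is to rewrite each product $x_{i_1}\wedge x_{j_{F',1}}\wedge\cdots\wedge x_{j_{F',n-1}}$ in the NBC basis and match the outcome with the signed sum over $Desc(L)$.

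For each such product, there are two cases. If the product is already NBC, it corresponds to a descendent well-fitted flag $F \in Desc(L)$ obtained by appending the codimension-$n$ intersection $S = \bigcap_k H_{i_k}$ to the bottom of $F'$, with associated permutation fixing $1$; its signature agrees with $sig_{L'}(F')$. If the product is not NBC, let $j$ denote the smallest index with $H_j \supseteq S$. Then the Orlik--Solomon relation
\[
\partial(x_j \wedge x_{i_1} \wedge x_{j_{F',1}}\wedge\cdots\wedge x_{j_{F',n-1}}) = 0,
\]
which is valid because these $n+1$ hyperplanes all contain $S$ of codimension $n$ and are therefore dependent, rewrites the product as an alternating sum of $n$ NBC-reducible monomials. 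I would argue that each of these corresponds to a flag in $Desc(L)$ whose permutation differs from the previous one by a transposition, and that the signs contributed by the OS substitution precisely match the change in the relative signature.

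The main obstacle is the sign and combinatorial bookkeeping: one must verify that every flag in $Desc(L)$ appears exactly once in the final expansion with coefficient $sig_L(F)$, and that the cumulative signs from the inductive step and the OS substitutions combine to yield the global prefactor $(-1)^{\binom{n}{2}}$. This prefactor presumably reflects the reversal of perspective between the original listing $(H_{i_1},\ldots,H_{i_n})$ --- in which the last hyperplane corresponds to $S_n$ --- and the NBC listing $x_{j_{F,1}}\wedge\cdots\wedge x_{j_{F,n}}$, in which $j_{F,1}$ corresponds to $S_1$. To calibrate the signs before attempting the general bookkeeping, I would first compute the formula by hand in a small test case such as a pencil of three concurrent lines in $\bc\bp^2$, where both sides can be checked term by term.
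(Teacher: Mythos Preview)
Your approach differs from the paper's and, as written, has a real gap.

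The paper does not induct on $n$. It introduces, for a monomial $x_{i_1}\wedge\cdots\wedge x_{i_n}$ with $i_1<\cdots<i_n$, the \emph{descendency failing tail}: the smallest $m$ for which $H_{i_m}$ is not the first hyperplane through $H_{i_m}\cap\cdots\cap H_{i_n}$. Letting $i_m'$ be that first index, the relation $\partial(x_{i_m'}\wedge x_{i_m}\wedge\cdots\wedge x_{i_n})=0$ rewrites the monomial as a signed sum of monomials each with strictly larger descendency failing tail; iterating terminates at NBC monomials. Each substitution step is identified with a cycle $(i_m,i_{m+1},\ldots,i_\ell)$ in $S_n$, so a permutation $\sigma$ is tracked along the branch of the rewriting tree; the paper then checks that exactly the descendent well-fitted flags occur, each once, and computes the accumulated sign as $(-1)^{\binom{n}{2}}\mathrm{sig}(\sigma)$ by counting inversions.

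In your scheme the problem is the step after the inductive hypothesis. Having written $x_{i_2}\wedge\cdots\wedge x_{i_n}$ as a sum of NBC terms $x_{j_{F',1}}\wedge\cdots\wedge x_{j_{F',n-1}}$, you multiply by $x_{i_1}$ and apply one OS relation at $S=\bigcap_k H_{i_k}$. But the $n$ resulting terms are \emph{not} NBC in general: the term omitting $x_{j_{F',k}}$ is $x_j\wedge x_{i_1}\wedge x_{j_{F',1}}\wedge\cdots\widehat{x_{j_{F',k}}}\cdots\wedge x_{j_{F',n-1}}$, and nothing forces $i_1<j_{F',1}$, nor that $H_{i_1}$ is first through the flat it now determines (which is typically a new flat, not one of the $S'_m$). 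So the assertion that each such term ``corresponds to a flag in $Desc(L)$ whose permutation differs from the previous one by a transposition'' is premature; further rewriting is needed, and a single such term may expand into several NBC monomials. There is also a structural mismatch: flags $F\in Desc(L)$ with $\sigma(1)\neq 1$ have $H_{i_1}$ sitting above the bottom level, so their truncation $(S_n\supset\cdots\supset S_2)$ is not well-fitted in $L'=(H_{i_2},\ldots,H_{i_n})$ and cannot arise from the inductive step by appending $S$; all of these flags must be produced during your OS-relation phase, and you have not explained that bijection. Making this precise essentially forces you to run the same iterative reduction the paper performs, at which point the direct argument via the descendency failing tail is the cleaner route.
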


\begin{proof}
Consider the following process of Gaussian elimination. Let $x_{i_1}\wedge\cdots \wedge x_{i_n}$ be an element of $A^n$ with $i_1 < i_2< \cdots < i_n$. Assume that $H_{i_1},\ldots,H_{i_n}$ form an independent set (otherwise, the wedge product would be directly zero). We will say that the monomial $x_{i_1}\wedge\cdots \wedge x_{i_n}$ has \emph{descendency failing tail} $m$ if $H_{i_k}$ is the first hyperplane that contains $H_{i_k}\cap H_{i_{k+1}}\cap\cdots \cap H_{i_n}$ for $k=1,\ldots,m-1$, but $H_{i_m'}$ is the first hyperplane that contains $H_{i_m}\cap\cdots \cap H_{i_n}$ being $i_m'< i_m$ (that is, we choose the first index in which the corresponding hyperplane can be substituted by another with lower index maintaining the intersection with the following ones). Note that $x_{i_1}\wedge\cdots \wedge x_{i_n}$ is an element of the NBC basis if and only if its descendency failing tail is not defined; in that case, we will say that it is $n+1$. Note also that the descendency failing tail cannot be equal to $n$, since the hyperplane $H_{i_n}$ is not contained in any other hyperplane of the arrangement.

Suppose now that the descendency failing tail of $x_{i_1}\wedge\cdots \wedge
x_{i_n}$ is $m$. This means that $\{H_{i_m'},H_{i_m},\ldots,H_{i_n}\}$ is a
dependent set, and hence we can use the relation
\[
\partial( x_{i_m'}\wedge x_{i_m}\wedge x_{i_{m+1}}\wedge \cdots\wedge x_{i_n})=0
\]

to express $x_{i_1}\wedge\cdots \wedge x_{i_n}$ as a sum of other monomials with higher descendency failing tail.
In particular, we have that
\[
\begin{array}{rcl}
x_{i_1}\wedge\cdots \wedge x_{i_n}&=&x_{i_1}\wedge\cdots \wedge x_{i_{m-1}}\wedge x_{i_m'}\wedge x_{i_{m+1}}\wedge \cdots\wedge x_{i_n}-\\

& & -x_{i_1}\wedge\cdots \wedge x_{i_{m-1}}\wedge x_{i_m'}\wedge x_{i_{m}}\wedge x_{i_{m+2}}\wedge \cdots\wedge x_{i_n}+
\\
& & +\cdots +\\
& & +(-1)^{n-m}\cdot x_{i_1}\wedge\cdots \wedge x_{i_{m-1}}\wedge x_{i_m'}\wedge x_{i_{m}}\wedge x_{i_{m+1}}\wedge \cdots\wedge x_{i_{n-1}}.
\end{array}
\]

We can keep applying this process to the resulting monomials, and at the end we will have a linear combination of monomials with descendency failing tail equal to $n+1$, that is, the expression of $x_{i_1}\wedge\cdots \wedge x_{i_n}$ in terms of the NBC basis.

Note that, at each step, the element
\[
 \frac{1}{x_{i_l}}(x_{i_1}\wedge\cdots \wedge x_{i_{m-1}}\wedge x_{i_m'}\wedge x_{i_{m}}\wedge \cdots\wedge \cdots\wedge x_{i_n})
\]
(where, $\frac{1}{x_{i_l}} $ represents the fact that the term is removed from the wedge product) corresponds to the flag 
\[
 (H_{i_n}\supset (H_{i_{n-1}}\cap H_{i_n})\supset\cdots\supset  (H_{i_1}\cap\cdots \cap H_{i_{m-1}}\cap H_{i_m'}\cap H_{i_{m}}\cap \cdots\cap {H_{i_{l-1}}} \cap{H_{i_{l+1}}} \cap\cdots\cap H_{i_n})),
\]
which is well fitted in the tuple $H_{i_1},\ldots,H_{i_n}$ with the permutation $(i_m,i_{m+1},\ldots,i_{l})$. Indeed, just consider the fact that $H_{i_m'}\cap H_{i_m}\cap\cdots\cap H_{i_{l-1}}\cap H_{i_{l+1}}\cap\ldots\cap H_{i_n}$ is the same as $H_{i_l}\cap H_{i_m}\cap\cdots\cap H_{i_{l-1}}\cap H_{i_{l+1}}\cap\ldots\cap H_{i_n}$.

That is, each term of each substitution process corresponds to a flag that is well fitted with respect to the original term. Since the descendency failing tail always increases, at the end we obtain elements whose corresponding flag is descendent; and by the previous argument, these flags should also be well fitted with respect to the original term (with the permutation obtained by composing the cycles obtained at each step).

To see that we obtain all the terms corresponding to well fitted descendent flags, take one such flag $F=(S_n\supset\cdots\supset S_1)$, and let $H_{j_{F,i}}$ be the first hyperplane that contains $S_i$. Assume that the flag is well fitted with respect to the term $x_{i_1}\wedge\cdots \wedge x_{i_n}$ with the permutation $\sigma$. In the previous process of substitution, we may obtain the term $x_{j_{F,1}}\wedge \cdots\wedge x_{j_{F,n}}$ as follows:
\[
\begin{array}{rcl}
 x_{i_1}\wedge\cdots\wedge x_{i_n}&=&\pm \frac{1}{x_{i_\sigma(1)}}(x_{j_{F,1}}\wedge x_{i_1}\wedge\cdots\wedge x_{i_n})\pm\cdots=\\
&=&\pm \frac{1}{x_{i_\sigma(2)}}\frac{1}{x_{i_\sigma(1)}}(x_{j_{F,1}}\wedge x_{j_{F,2}}\wedge x_{i_1}\wedge\cdots\wedge x_{i_n})\pm\cdots=\\
&=&\cdots\\
&=&\pm \frac{1}{x_{i_\sigma(n)}}\cdots\frac{1}{x_{i_\sigma(1)}}(x_{j_{F,1}}\wedge \cdots\wedge x_{j_{F,n}}\wedge x_{i_1}\wedge\cdots\wedge x_{i_n})\pm\cdots=\\
&=&x_{j_{F,1}}\wedge \cdots\wedge x_{j_{F,n}}\pm\cdots
\end{array}
\]

The previous expression shows that each monomial in the final expression is labeled by some permutation $\sigma$, and that the path followed to get to it keeps track of the permutation. Furthermore, the permutation $\sigma$ determines the process to get the monomial. From all those possible permutations, only the ones that correspond to a descendent flag will appear. That is, the substitution process has the structure of a tree isomorphic to some subtree of the permutations of $n$ elements.

 This means that each one of the final elements of the NBC basis appears only once; that is, the only possible coefficients in the final expression are $\pm1$.

Let's see now that the sign of these coefficients actually corresponds to the signature of the corresponding well-fitted descendent flag.

At each step of the process

\[
\begin{array}{rcl}
 x_{i_1}\wedge\cdots\wedge x_{i_n}&=&\pm \frac{1}{x_{i_\sigma(1)}}(x_{j_{F,1}}\wedge x_{i_1}\wedge\cdots\wedge x_{i_n})\pm\cdots=\\
&=&\pm \frac{1}{x_{i_\sigma(2)}}\frac{1}{x_{i_\sigma(1)}}(x_{j_{F,1}}\wedge x_{j_{F,2}}\wedge x_{i_1}\wedge\cdots\wedge x_{i_n})\pm\cdots=\\
&=&\cdots\\
&=&\pm \frac{1}{x_{i_\sigma(n)}}\cdots\frac{1}{x_{i_\sigma(1)}}(x_{j_{F,1}}\wedge \cdots\wedge x_{j_{F,n}}\wedge x_{i_1}\wedge\cdots\wedge x_{i_n})\pm\cdots=\\
&=&x_{j_{F,1}}\wedge \cdots\wedge x_{j_{F,n}}\pm\cdots
\end{array}
\]
we are substituting $x_{i_{\sigma(k)}}$ by $x_{j_{F,k}}$. In this substitution, the sign of the new term will depend on the order in which $\sigma(k)$ appears among the remaining $\sigma(k),\sigma(k+1),\cdots,\sigma(n)$. If $\sigma(k)$ is in an even order (that is, if there are an odd amount of indexes $j<k$ such that $\sigma(j)<\sigma(k)$) the sign will change, and it will remain the same otherwise. At the final step, we will have a $+$ sign if the total amount of pairs $j<k$ such that $\sigma(j)<\sigma(k)$ is even, and a $-$ sign otherwise. Since the signature of the permutation $\sigma$ can be counted as the parity of the set $\{(i,j)\mid i<j, \sigma(i)>\sigma(j)\}$, and there is a total amount of ${n}\choose{2}$ possible pairs, we have the result.

% 
% If $i_1$ is the index of the first hyperplane that contains $H_{i_1}\cap\cdots \cap H_{i_n}$, then for every descendent well fitted flag $F$, $j_{F,n}$ is equal to $i_1$; and the corresponding permutation must fix $1$ (otherwise the flag wouldn't be descendent). In that case we can factorize $x_{i_1}$ from the equality and proceed with $x_{i_2}\wedge\cdots\wedge x_{i_n}$.
% 
% In other case, let $i_0$ be the index of the first hyperplane that contains $H_{i_1}\cap\cdots \cap H_{i_n}$. By using the relation 
% \[x_{i_1}\wedge\cdots \wedge x_{i_n}-x_{i_0}\wedge x_{i_2}\wedge\cdots\wedge x_{i_n}+\cdots\pm x_{i_0}\wedge\cdots\wedge x_{i_{n-1}}
% \]
% we can express our term as a sum of terms of the form $x_{i_0}\wedge$ . Then we can proceed untill we have no broken circuits. It is immediate to check that, for a tuple without broken circuits there is only one possible well fitted descendent flag, that must correspond to the trivial permutation.
% 
% So, at the end of the process, we have expressed $x_{i_1}\wedge\cdots\wedge
%  x_{i_n}$ as a linear combination of elements of the NBC basis.
% 
% We have proven that in the expansion in terms of the NBC basis, it will appear exactly one term for each well fitted descendendent flag. 
\end{proof}

\begin{ejm}
 We will now show this process for the arrangement in Example~\ref{ejemplo1}, for the monomial $x_3\wedge x_5\wedge x_6\wedge x_8$.

In the first step, we choose the first hyperplane through $H_3\cap H_5\cap H_6\cap H_8$, which is $H_1$, so we have:
\[
 x_3\wedge x_5\wedge x_6\wedge x_8=x_1\wedge x_5\wedge x_6\wedge x_8-x_1\wedge x_3\wedge x_6\wedge x_8+x_1\wedge x_3\wedge x_5\wedge x_8+x_1\wedge x_3\wedge x_5\wedge x_6.
\]

Now, for each one of these summands, we continue with the same process for the last three indices:
\begin{itemize}
 \item the first hyperplane through $H_5\cap H_6\cap H_8$ is $H_5$, so we don't need to change anything at this step (if we do the substitution, we will obtain 
\[x_5\wedge x_6 \wedge x_8=x_5\wedge x_6 \wedge x_8-x_5\wedge x_5 \wedge x_8+x_5\wedge x_5 \wedge x_6=x_5\wedge x_6 \wedge x_8,\]
that is, we don't change anything).
\item the first hyperplane through $H_3\cap H_6\cap H_8$ is $H_2$, so we have
\[
 x_1\wedge x_3\wedge x_6\wedge x_8=x_1\wedge x_2\wedge x_6\wedge x_8-x_1\wedge x_2\wedge x_3\wedge x_8+x_1\wedge x_2\wedge x_3\wedge x_6.
\]
\item the first hyperplane through $H_3\cap H_5\cap H_8$ is $H_3$, so again we don't need to change anything.
\item the first hyperplane through $H_3\cap H_5\cap H_6$ is $H_1$, so we have
\[
 x_1\wedge x_3\wedge x_5\wedge x_6=0.
\]
\end{itemize}
Finally, we just need to do the substitution in the third index. The first hyperplane through $H_6\cap H_8$ is $H_6$, the first one through $H_3\cap H_8$ is $H_3$ and the first one through $H_5\cap H_8$ is $H_5$. This, together with the fact that $H_2\cap H_3\cap H_6$ is a non generic intersection (that is, $x_2\wedge x_3\wedge x_6=0$), gives us that
\[
 x_3\wedge x_5\wedge x_6\wedge x_8=x_1\wedge x_5\wedge x_6\wedge x_8-x_1\wedge x_2\wedge x_6\wedge x_8+x_1\wedge x_2\wedge x_3\wedge x_8+x_1\wedge x_3\wedge x_5\wedge x_8
\]
is the expression in terms of the NBC basis. These summands correspond to the permutations $3568$, $5368$, $5638$ and $6358$, which are precisely the ones that correspond to the well-fitted descendent flags. Note that the rest of the permutations do not appear because they would give place to zero terms, precisely because of the corresponding flags being non descendent.

\end{ejm}
\section{Characterizing the vanishing products}

In this section we will use the previous formula to characterize the vanishing products in the Orlik-Solomon algebra.

 Let $(w_i=a_1^i\cdot x_1+\cdots + a_k^i\cdot x_l)_{i=1,\ldots,n}$ be an $n$-tuple of vectors in $A^1$. Consider the matrix $M=(a_i^j)$ whose rows are the coefficients of these vectors in the canonical basis. Denote by $v_1,\ldots ,v_k$ the column vectors of this matrix.
\begin{lema}
\label{lemadet1}
If we express the product $w_1\wedge\cdots\wedge w_n$ in terms of the NBC basis, the coefficient in $x_{i_1}\wedge \cdots \wedge x_{i_n}$ will be equal to the determinant of the matrix whose column vectors are
\[
 \sum_{H_i\supseteq \bigcap_{j=1}^n H_{i_j}}v_i,\cdots , \sum_{H_i\supseteq \bigcap_{j={n-1}}^n H_{i_j}}v_i,v_{i_n}
\]

\end{lema}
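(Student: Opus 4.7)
The plan is to expand both sides by multilinearity and match them term by term, using Lemma~\ref{lemacoeficients} to translate between plain exterior monomials and the NBC basis.

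On the left, multilinearity of the wedge product in $E$ gives
\[
w_1\wedge\cdots\wedge w_n = \sum_{K\,:\,k_1<\cdots<k_n}\det(v_K)\,x_{k_1}\wedge\cdots\wedge x_{k_n},
\]
where $v_K$ is the $n\times n$ submatrix of $M$ with columns $v_{k_1},\ldots,v_{k_n}$. I would first observe that when the hyperplanes $\{H_{k_1},\ldots,H_{k_n}\}$ are linearly dependent the monomial vanishes in $A^n$: taking a circuit $C\subseteq K$ and wedging the OS relation $\partial(x_C)=0$ with any $x_h$, $h\in C$, kills every summand but one and forces $x_C=0$, hence $x_K=0$. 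For each independent $K$, Lemma~\ref{lemacoeficients} expands $x_{k_1}\wedge\cdots\wedge x_{k_n}$ in the NBC basis, so the coefficient of $x_{i_1}\wedge\cdots\wedge x_{i_n}$ in the wedge product reduces to $\sum_K\det(v_K)\cdot c_K$ over independent $K$, where $c_K$ is (up to the sign conventions of Lemma~\ref{lemacoeficients}) equal to $sig_K(F_0)$ whenever the flag $F_0$ with $S_k=\bigcap_{j=k}^n H_{i_j}$ is well-fitted in $K$, and zero otherwise.

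On the right, I would expand $\det(u_1,\ldots,u_{n-1},v_{i_n})$, with $u_k=\sum_{H_h\supseteq S_k}v_h$, by multilinearity in the first $n-1$ columns and then group the resulting monomial determinants by the underlying index set $K=\{i_1',\ldots,i_{n-1}',i_n\}$. Each group yields $\det(v_K)$ times a signed sum, over permutations $\tau$ of $K$ with $k_{\tau(n)}=i_n$ and $H_{k_{\tau(k)}}\supseteq S_k$ for $k<n$, of $\mathrm{sgn}(\tau)$. The heart of the matching is the observation that, for independent $K$, the containment $H_{k_{\tau(k)}}\supseteq S_k$ automatically upgrades to the equality $\bigcap_{j=k}^n H_{k_{\tau(j)}}=S_k$: both subspaces have codimension $n-k+1$ (the left by independence of the chosen $(n-k+1)$-element subset of $K$; the right by construction) and the left contains the right because $H_{k_{\tau(j)}}\supseteq S_j\supseteq S_k$ for every $j\ge k$. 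Hence $\tau$ is precisely a well-fitting permutation of $F_0$ in $K$, and because independent families of hyperplanes produce distinct intersections for distinct subsets, the well-fitting $\sigma$ is unique when it exists; the group sum therefore collapses to $sig_K(F_0)$ (or zero), matching the LHS contribution for each independent $K$.

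The main obstacle, in my view, is twofold. First, the $(-1)^{\binom{n}{2}}$ prefactor in Lemma~\ref{lemacoeficients} needs to be reconciled with the sign produced by reordering columns in the determinant expansion on the right; I expect this to fall out automatically because both track the parity of the same permutation on $\{1,\ldots,n\}$, but the check is delicate and requires pinning down the sign conventions precisely. Second, one must rule out spurious contributions from dependent $K$ on the right, where the weaker inclusion $H_{k_{\tau(k)}}\supseteq S_k$ could in principle still admit valid $\tau$'s even though the LHS contribution from such $K$ vanishes; I would handle this by exhibiting a sign-reversing involution on valid $\tau$'s for dependent $K$, using a linear dependency among the $\alpha_{k_j}$'s to swap two hyperplanes in the assignment $\tau$ while preserving all the containments, thereby pairing valid $\tau$'s with opposite-sign partners and forcing the group sum to vanish.
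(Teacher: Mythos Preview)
Your strategy---expand both sides by multilinearity and match via Lemma~\ref{lemacoeficients}---is exactly the paper's. The difference lies in how obstacle (b) is handled. The paper does not build an involution; instead, before expanding the right-hand determinant it subtracts column $k+1$ from column $k$ for $k=1,\ldots,n-1$, which leaves the value unchanged but replaces $u_k$ by $\sum_{H_h\supseteq S_k,\ H_h\nsupseteq S_{k+1}} v_h$. After this, every term in the multilinear expansion is indexed by a tuple $(h_1,\ldots,h_n)$ with $H_{h_k}\supseteq S_k$ and $H_{h_k}\nsupseteq S_{k+1}$, and a one-line downward induction on $k$ gives $\bigcap_{j\ge k}H_{h_j}=S_k$ for every $k$: in the inductive step $H_{h_k}\cap S_{k+1}$ is strictly contained in $S_{k+1}$ (since $H_{h_k}\nsupseteq S_{k+1}$) and contains $S_k$, hence equals $S_k$ by codimension. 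So every surviving tuple is automatically independent and well-fitting with a uniquely determined permutation; dependent $K$ simply never appear, and your involution is unnecessary.

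Your proposed involution, as sketched, is the part I would flag as a genuine gap: a linear relation among the $\alpha_{k_j}$ does not by itself single out a canonical pair of positions to swap while preserving all the containments $H_{h_k}\supseteq S_k$, and nothing guarantees the resulting map is involutive. The column-subtraction trick performs exactly the cancellation you want without having to construct such a pairing. It also makes the uniqueness of the well-fitting $\sigma$ for independent $K$ (which you argue separately) immediate. As for obstacle (a), the paper treats the sign just as lightly as you do; both sides track the same permutation $\sigma$, and the paper does not pause over the global $(-1)^{\binom{n}{2}}$ convention.
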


\begin{proof}
Note that the determinant is equal to the one of the matrix whose vectors are
\[
 \sum_{\substack{H_i\supseteq \bigcap_{j=1}^n H_{i_j}\\ H_i\nsupseteq \bigcap_{j=2}^n H_{i_j}}}v_i,\cdots , \sum_{\substack{H_i\supseteq \bigcap_{j={n-1}}^n H_{i_j}\\ H_i\neq H_{i_n}}}v_i,v_{i_n}.
\] 
If we expand this expression as a sum of $n\times n$ minors of the matrix $M$, we obtain precisely
\[
 \sum \left| \begin{matrix}
              v_{\sigma(j_1)} & v_{\sigma(j_{2})} &\cdots & v_{\sigma(j_n)} 
             \end{matrix}
\right|
\]
where the sum ranks through all independent tuples $H_{j_1},\ldots,H_{j_n}$ such that the flag $H_{i_n}\supseteq H_{i_n}\cap H_{i_{n-1}}\supseteq\cdots\supseteq H_{i_n}\cap\cdots\cap H_{i_1}$ is well fitted with respect to them, and $\sigma$ represents the corresponding permutations.

If we consider the vectors $w_i$ in the free exterior algebra, the coefficients of their wedge product in the canonical basis of $E^n$ will be precisely the $n\times n$ minors of the matrix $M$. Applying Lemma~\ref{lemacoeficients} we get the result.
\end{proof}

Given the vectors $w_i$, consider the following decoration of the intersection lattice of the arrangement. Decorate the hyperplane $H_i$ with the vector $v_i$. Then decorate each flat $S$ with $v_S:=\sum_{H_i\supseteq S}v_i$.
\begin{lema}\label{flagcondition}
The wedge product $w_1\wedge\cdots\wedge w_n$ is zero if and only if, for every flag $S_1\supseteq\cdots\supseteq S_n$, the vectors $v_{S_1},\ldots,v_{S_n}$ span a subspace of dimension lower than $n$.
\end{lema}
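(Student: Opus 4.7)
The plan is to use Lemma~\ref{lemadet1} to translate the vanishing of $w_1\wedge\cdots\wedge w_n$ into a collection of determinantal vanishings, one for each descendent flag, and then to upgrade ``descendent'' to ``arbitrary'' by a reordering of the hyperplanes.

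For the direction ($\Leftarrow$), the NBC basis elements of $A^n$ are in bijection with length-$n$ descendent flags (as recorded in the paper just before the definition of well fitted flags), and Lemma~\ref{lemadet1} identifies the coefficient of $x_{i_1}\wedge\cdots\wedge x_{i_n}$ in $w_1\wedge\cdots\wedge w_n$ with the determinant of the matrix whose columns are $v_{S_1},\ldots,v_{S_n}$ along the associated descendent flag. If every flag (in particular every descendent one) satisfies the rank condition, all these determinants vanish, so every NBC coefficient is zero and therefore $w_1\wedge\cdots\wedge w_n=0$.

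For ($\Rightarrow$), assuming $w_1\wedge\cdots\wedge w_n=0$, the same lemma immediately gives the rank condition for every descendent flag. The key observation for the upgrade is that both the property ``$w_1\wedge\cdots\wedge w_n=0$'' and the vectors $v_{S_k}$ are invariant under reordering of the hyperplanes; only the notion of descendent depends on the ordering. So, given an arbitrary strict flag $F=(S_1\supsetneq\cdots\supsetneq S_n)$, it suffices to relabel $H_1,\ldots,H_l$ in a way that makes $F$ descendent and then invoke the descendent case already proved. Non-strict flags are handled trivially: if $S_k=S_{k+1}$ then $v_{S_k}=v_{S_{k+1}}$ and the rank condition is automatic.

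To build such a reordering, for each $k\geq 2$ pick a hyperplane $H^{(k)}\supseteq S_k$ with $H^{(k)}\not\supseteq S_{k-1}$ (possible because $S_{k-1}$ equals the intersection of all hyperplanes containing it, so a strict inclusion $S_k\subsetneq S_{k-1}$ forces at least one such hyperplane to exist); pick also any $H^{(1)}\supseteq S_1$. These $n$ hyperplanes are pairwise distinct since, for $j<k$, one has $H^{(j)}\supseteq S_j\supseteq S_{k-1}$, contradicting $H^{(k)}\not\supseteq S_{k-1}$. Assign label $n-k+1$ to $H^{(k)}$ and labels $>n$ to the remaining hyperplanes. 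For each $k$ the labeled hyperplanes containing $S_k$ are exactly $H^{(1)},\ldots,H^{(k)}$, so the minimum label of a hyperplane containing $S_k$ equals $n-k+1$; this is strictly decreasing in $k$, which is precisely the descendent condition in the new ordering. The main obstacle is this reordering step: one must carefully verify both the existence and distinctness of the $H^{(k)}$'s, and that no unlabeled hyperplane interferes with the minimum-label computation. Modulo this bookkeeping, both implications follow as immediate corollaries of Lemma~\ref{lemadet1}.
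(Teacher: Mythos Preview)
Your proof is correct, and it takes a genuinely different route from the paper's own argument. Both proofs use Lemma~\ref{lemadet1} to reduce the vanishing of $w_1\wedge\cdots\wedge w_n$ to the determinantal condition for \emph{descendent} flags; the divergence is in how one passes from descendent flags to arbitrary flags. The paper stays inside the fixed ordering: it defines a ``descendency failing head'' for a non-descendent flag $F^0$, observes that the determinants attached to all flags agreeing with $F^0$ except at that position sum to zero, and iterates this substitution to express the determinant of $F^0$ as a $\pm$-linear combination of determinants of descendent flags. Your argument instead exploits the ordering-invariance of both $w_1\wedge\cdots\wedge w_n$ and the decorations $v_S$, and simply relabels the hyperplanes so that the given flag becomes descendent. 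The construction you give (choosing $H^{(k)}\supseteq S_k$ with $H^{(k)}\nsupseteq S_{k-1}$ and labeling $H^{(k)}$ by $n-k+1$) does work: an easy induction shows $H^{(1)}\cap\cdots\cap H^{(k)}=S_k$ for every $k$, so under the new ordering the flag is exactly the one attached to the NBC monomial $x_1\wedge\cdots\wedge x_n$, and Lemma~\ref{lemadet1} applies verbatim. Your approach is shorter and more conceptual; the paper's approach has the advantage of producing an explicit linear relation among the flag-determinants within a single ordering, which is closer in spirit to the Gaussian-elimination viewpoint of Lemma~\ref{lemacoeficients}.
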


\begin{proof}
 From Lemma~\ref{lemadet1}, it is clear that the wedge product is zero if and only if the condition holds for the descendent flags (which are the ones corresponding to elements of the NBC basis). Let's see now that if the condition holds for all descendent flags, it must also hold for any flag.

In order to do so, we will see that the determinant corresponding to a non-descendent flag can be expressed as a linear combination of determinants corresponding to descendent flags.

Consider a flag $F^0=(S_n\supseteq\cdots\supseteq S_1)$. We will say that the descendency of $F^0$ fails at $i$ if $j_{F^0,i}=j_{F^0,i-1}$. We will call \emph{descendency failing head} to the biggest $i$ for which this occurs.

Consider the set $\calF:=\{F'=S_n\supseteq\cdots \supseteq S_{i+1}\supseteq S_{i'}\supseteq S_{i-1}\supseteq\cdots  \supseteq S_1)\}$ of flags that coincide with $F$ at every position except, maybe at the $i$'th one (in particular, $F_0\in \calF$). It is easy to check that
\[
 \sum_{(S_n\supset\cdots  \supset S_1)\in\calF} \left| 
\begin{matrix}
v_{S_n} & \cdots & v_{S_1}
 \end{matrix}
\right|=0.
\]
and hence we can express the determinant corresponding to $F_0$ as a linear combination of determinants corresponding to flags that coincide with $F$ at all steps except at position $i$. Note that, among all flags in $\calF$, only $F^0$'s descendency fails at  $i$. The rest of them either have descendency failing head equal to $i+1$, or smaller than $i$. 

For the flags with descendency failing head equal to $i+1$ obtained in the previous step, we can apply the same argument, expressing their determinants as a linear combination of the determinants corresponding to flags with descendency index equal to $i+2$ or smaller than $i$ (their descendency cannot fail at step $i$ because at levels $i$ and $i-1$ they coincide with flags obtained in the first step).

Note that if $i=n$, this process of ``pushing up'' the descendency failing head will just obtain flags with descendency failing head smaller than $n$.

So we have seen that, with this substitution process, we can express our flag $F^0$ as a linear combination of flags with smaller descendency failing head. An easy induction argument completes the proof. 
\end{proof}
If a choice of column vectors satisfy the hypothesis of Lemma~\ref{flagcondition}, we will say that they satisfy the \emph{flag condition}.

Note that the hypothesis of the lemma are equivalent to say that the subspace spanned by $w_1,\ldots,w_n$ is $n$-isotropic.

As a direct consequence of this result, we can generalize it to higher dimensional isotropic spaces.
\begin{lema}
 Let $V$ be a subspace of $A^1$, and let $M$ be a matrix whose rows are the coefficients of a generating system of $V$ in terms of the canonical basis of $A^1$. If we decorate the intersection lattice with vectors $v_S$ as before, then $V$ is $k$-isotropic if and only if for every truncated flag of length $k$ $F=(S_k\supset \cdots \supset S_1)$ with $dim(S_i)=i+n-k$, the vectors $v_{S_k},\ldots,v_{S_1}$ span a space of dimension lower than $k$. 
\end{lema}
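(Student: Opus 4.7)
The plan is to adapt Lemmas~\ref{lemacoeficients}, \ref{lemadet1} and~\ref{flagcondition} from $n$-tuples to $k$-tuples of hyperplanes, and then pass from an arbitrary $k$-tuple in $V$ to the generating system of $V$ by a linear-algebra reduction.

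First I would observe that the Gaussian elimination used in Lemma~\ref{lemacoeficients} never uses the full length $n$: for a linearly independent $k$-tuple $L=(H_{i_1},\ldots,H_{i_k})$, the same argument yields
\[
x_{i_1}\wedge\cdots\wedge x_{i_k}=(-1)^{\binom{k}{2}}\sum_{F\in Desc(L)}sig_L(F)\,x_{j_{F,1}}\wedge\cdots\wedge x_{j_{F,k}},
\]
where $F$ now ranges over truncated descendent flags of length $k$ well fitted with respect to $L$. Combined with the $k$-row version of Lemma~\ref{lemadet1}, this shows that for $k$ vectors $w_1,\ldots,w_k\in A^1$ with decoration $\{v_S\}$, the coefficient of any NBC monomial of degree $k$ in $w_1\wedge\cdots\wedge w_k$ is a $k\times k$ determinant built from the $v_S$ along the corresponding truncated descendent flag.

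Next I would rerun the proof of Lemma~\ref{flagcondition}. The sum-over-variants identity continues to hold in the truncated setting: varying $S_i'$ at an interior position $2\le i< k$ produces a linear combination of $v_{S_{i+1}}$ and $v_{S_{i-1}}$ in the $i$-th column, and varying at $i=k$ gives exactly $v_{S_{k-1}}$, since $\sum_{H_j\supseteq S_{k-1}}v_j=v_{S_{k-1}}$, which duplicates the $(k-1)$-th column. In every case the sum of determinants vanishes, and the ``push up the descendency failing head'' induction gives: $w_1\wedge\cdots\wedge w_k=0$ if and only if for every truncated flag $(S_k\supset\cdots\supset S_1)$ with $\dim(S_i)=i+n-k$, the decoration vectors $v_{S_1},\ldots,v_{S_k}$ span a subspace of dimension less than $k$.

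Finally, any $k$-tuple $w_1,\ldots,w_k\in V$ has coefficient matrix $M_w=CM$ for some $k\times d$ matrix $C$, with induced decoration $v_S^w=Cv_S$. If the flag condition holds for $M$, it automatically holds for every $M_w$, so $\bigwedge^k V=0$. Conversely, if some flag $F$ has $\dim\mathrm{span}(v_{S_1},\ldots,v_{S_k})=k$, choosing $C$ to be a projection onto a $k$-dimensional quotient on which the $v_{S_j}$ remain independent produces a $k$-tuple in $V$ with nonzero wedge product. The main obstacle is verifying the variant-sum identity at the new boundary $i=k$ (where there is no $S_{k+1}$); once this is settled, the rest is a direct transcription of the previous proofs combined with the above linear-algebra reduction.
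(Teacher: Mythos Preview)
Your argument is correct but takes a different route from the paper. Rather than rerunning Lemmas~\ref{lemacoeficients}--\ref{flagcondition} in degree $k$, the paper reduces to $k=n$ in one stroke by intersecting the arrangement with a generic linear subspace of dimension $k$: this truncates both the Orlik--Solomon algebra and the intersection lattice at level $k$, so Lemma~\ref{flagcondition} applies verbatim to the restricted arrangement. The remaining step --- passing from $n$-tuples of generators to all of $V$ --- is just the observation that the rank of $(v_{S_n}\mid\cdots\mid v_{S_1})$ is below $n$ if and only if all its $n\times n$ minors vanish, which is your linear-algebra reduction stated more directly (your projection $C$ can simply be the selection of $k$ rows witnessing a nonzero minor). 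Your approach is more self-contained, since it avoids invoking the generic-section truncation fact, at the cost of reproving three lemmas; the boundary check at $i=k$ that you flag is indeed the only new point, and your verification of it is correct.
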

\begin{proof}
If $k<n$, we can consider the arrangement induced by intersection with a generic $k$-dimensional subspace. The OS algebra and the intersection lattice of this new arrangement will be obtained from the original ones by truncating at the $k$'th level. So, without loss of generality, we can assume that $k=n$.

In this case, note that $V$ is $n$-isotropic if and only if the product of any $n$ generators vanishes. Given a flag $F=(S_k\supset \cdots \supset S_1)$, consider the matrix whose columns are $v_{S_k},\ldots,v_{S_1}$. Then by Lemma~\ref{flagcondition}, $V$ is isotropic if and only if the $n\times n$ minors of all these matrices are zero; which is equivalent to ask that these matrices have rank smaller than $n$.
\end{proof}

\section{Linear systems embedded in the arrangement}
We will now use he previous results to prove a pencil-like theorem for a special case of arrangements.

\begin{thm}
 Let $\ca\subseteq \bc^n$ be a hyperplane arrangement, with INNC and such that each INNC is the intersection of $n+1$ hyperplanes in a single point. Then there exists an $n$-isotropic subspace $V$ of dimension $n$, not contained in any coordinate hyperplane if and only if there exist a partition $\Pi=\Pi_1,\ldots,\Pi_{n+1}$ of $\ca$ such that:
\begin{itemize}
 \item Each element of the partition contains the same number of hyperplanes
\item There exists $(a_1,\ldots,a_{n+1})\in{\bc^\star}^{n+1}$ such that 
\[
 a_1\cdot f_1+\cdots +a_{n+1} \cdot f_{n+1}=0,
\]
where $f_i$ is the product of the linear forms defining the hyperplanes in $\Pi_i$.

\end{itemize}

Moreover, the subspace $V$ is generated by $\{\sum_{H_j\in \Pi_i} x_i-\sum_{H_j\in \Pi_{n+1}} x_i\mid 1\leq i\leq n\}$.
\end{thm}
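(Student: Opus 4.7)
I would prove the two directions of the equivalence separately, since their arguments are of quite different flavor.

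For the $(\Leftarrow)$ direction, given the partition $\Pi$ and the linear relation $\sum a_if_i=0$, I would take $V$ to be the span of the proposed generators $w_i=\sum_{H_j\in\Pi_i}x_j-\sum_{H_j\in\Pi_{n+1}}x_j$ and verify $n$-isotropy via Lemma~\ref{flagcondition}. The columns $v_j$ of the coefficient matrix take only $n+1$ values in $\bc^n$: $e_i$ for $H_j\in\Pi_i$ with $i\le n$, and $-(e_1+\cdots+e_n)$ for $H_j\in\Pi_{n+1}$. All of these are nonzero, so $\dim V=n$ and $V$ is not contained in any coordinate hyperplane. Evaluating $\sum a_if_i=0$ at an intersection point $p$ gives the key combinatorial dichotomy: at every such $p$, either all $n+1$ classes have a hyperplane through $p$, or at least two classes are missing from $p$ entirely (if exactly one class $i_0$ were present with $f_{i_0}(p)\ne 0$, the identity would reduce to $a_{i_0}f_{i_0}(p)\ne 0$).

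For a flag $S_n\supset\cdots\supset S_1$, I would write $v_{S_k}=\sum_{i=1}^n(m_i(S_k)-m_{n+1}(S_k))e_i$ where $m_i(S_k)$ counts the hyperplanes of $\Pi_i$ containing $S_k$; these multiplicities are non-increasing as $k$ grows, so classes missing at the point $S_1$ are missing throughout the flag. If all classes are present at $S_1$, the INNC hypothesis (exactly $n+1$ hyperplanes through $S_1$) forces one hyperplane from each class and $v_{S_1}=0$. Otherwise two classes $\alpha,\beta$ are missing along the whole flag: when $\alpha,\beta\le n$ the rows $\alpha$ and $\beta$ of the matrix with columns $v_{S_k}$ coincide (both equal to $-m_{n+1}(S_\bullet)$), while when $n+1\in\{\alpha,\beta\}$ one of those rows is identically zero. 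In every case the determinant vanishes and the flag condition holds.

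For the $(\Rightarrow)$ direction, I would pick a basis of $V$ and consider the column vectors $v_j\ne 0$. Applied to the family of flags through an INNC point $p$ with hyperplanes $H_{j_0},\ldots,H_{j_n}$, Lemma~\ref{flagcondition} together with elementary column operations becomes the requirement that, for every $(n-1)$-subset $T\subset\{0,\ldots,n\}$ with complement $\{k,l\}$, the family $\{v_{j_i}\}_{i\in T}\cup\{v_{j_k}+v_{j_l}\}$ is linearly dependent in $\bc^n$. When $v_{j_0},\ldots,v_{j_n}$ have rank $n$, the essentially unique dependence $\sum c_iv_{j_i}=0$ is forced to satisfy $c_k=c_l$ for every pair, so all $c_i$ are equal and $v_p=\sum_iv_{j_i}=0$; the degenerate lower-rank situations reduce to the same conclusion by restricting to a suitable sub-arrangement. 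The analogous analysis at a normal-crossing point shows that the $n$ corresponding column vectors are linearly dependent.

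The final step is to cluster the $v_j$'s into exactly $n+1$ projective classes: define $H_a\sim H_b$ when $v_a$ and $v_b$ are proportional, and use the INNC relations $\sum_{H_j\ni p}v_j=0$ (each of which ties together one hyperplane from every emerging class) to show that exactly $n+1$ classes arise with representatives $u_1,\ldots,u_{n+1}$ summing to zero after rescaling. Normalizing $u_i=e_i$ for $i\le n$ and $u_{n+1}=-(e_1+\cdots+e_n)$ recovers the partition $\Pi$; equal cardinality of the $\Pi_i$'s follows from the homogeneity of the resulting linear relation, and the relation $\sum a_if_i=0$ itself is then reconstructed from the cluster structure, essentially by inverting the $(\Leftarrow)$ construction (in the spirit of Castelnuovo--de Franchis). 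I expect the main obstacle to be precisely this clustering step: one must control how the local INNC relations propagate globally across the arrangement, and it is here that the hypothesis of each INNC being exactly $n+1$ hyperplanes at a single point is essential, since it prevents the $v_j$'s from spreading into more than $n+1$ projective classes.
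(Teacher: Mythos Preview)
Your treatment of the $(\Leftarrow)$ direction is correct and in fact more explicit than the paper's, which simply states that when the arrangement is a union of $n+1$ fibres one can build the matrix $M$ satisfying the flag condition ``just as in the case of line arrangements''. Your dichotomy argument (all classes present versus at least two classes missing) and the resulting row analysis of the flag matrix are fine.

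For the $(\Rightarrow)$ direction your route diverges substantially from the paper's, and it has a genuine gap precisely where you flag it. The paper does \emph{not} attempt to cluster the $v_j$'s directly; instead it proceeds by induction on $n$, the base case $n=2$ being the Falk--Yuzvinsky result for line arrangements. For the inductive step one fixes a hyperplane $H_0$, passes to the restriction arrangement $\ca|_{H_0}$ (still INNC, now with $n$ hyperplanes through each INNC point), and reduces all columns modulo $v_0$ to produce an $(n-1)$-isotropic subspace $V'$ of the restricted OS algebra. Induction then supplies a partition $\Pi'_1,\ldots,\Pi'_n$ and a relation $a_1^0\bar f_1+\cdots+a_n^0\bar f_n=0$ on $H_0$; the missing block $\Pi_{n+1}$ is \emph{defined} to be $\{H_j\mid v_j\in\bc\cdot v_0\}$. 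One then argues that the coefficient vectors $(a_i^j)$ obtained from the various $H_j\in\Pi_{n+1}$ all agree (otherwise one produces a fibre of the restricted linear system with a component missing all base points), and a line-count shows $|\Pi_{n+1}|=|\Pi_i|$, so divisibility becomes equality and the global relation follows.

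By contrast, your proposed clustering step is not justified: knowing $\sum_{H_j\ni p} v_j=0$ at each INNC point (which, as the paper also notes, is what the flag condition says in the rank-$n$ case) does not by itself force the $v_j$ to fall into exactly $n{+}1$ projective classes with \emph{equal} scalars within each class, nor does it produce the algebraic relation $\sum a_i f_i=0$. The appeal to ``Castelnuovo--de Franchis'' and the remark that ``degenerate lower-rank situations reduce to the same conclusion by restricting to a suitable sub-arrangement'' are the places where real work is needed; the paper's inductive architecture is exactly what supplies that work, since the induction hypothesis hands you both the partition \emph{and} the linear relation on a hyperplane section, which you then lift.
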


\begin{proof}
If the arrangement is indeed a union of $n+1$ fibers of a linear system it is easy to construct the matrix $M$ that satisfies the previous flag condition, just as in the case of line arrangements. So let's focus on the other implication.

 It is already known for line arrangements (it is a straightforward consequence of the proof of Theorem~3.11 in \cite{falk-yuzvinsky-multinets}); so we can proceed by induction over $n$.

Take one hyperplane $H_0\in\ca$, and consider the restriction arrangement. Is is clear that this restriction arrangement must also be INNC with only $n$ hyperplanes going through each non normal crossing. If we take the matrix $M$ corresponding to the subspace $V$, and reduce all columns modulo $v_0$, we obtain a new subspace $V'$ in the Orlik-Solomon algebra of the corresponding restriction arrangement. This subspace will be inside the coordinate hyperplanes corresponding to those $H_i$ such that $v_i$ is proportional to $v_0$. So, if we consider the restriction subarrangement $\ca '$ formed by $\{H_0\cap H_j\mid v_j\notin \bc\cdot v_0\}$ we obtain an arrangement in $\bc^{n-1}$ and a subspace $V'\subseteq A'$ (being $A'$ the Orlik-Solomon algebra of the subarrangement $\ca'$). It is straightforward to check that the flag conditions for $V$ implies the flag conditions for $V'$, and hence we can apply the induction hypothesis and conclude that there exists a partition $\Pi'=\Pi'_1,\ldots,\Pi'_n$ of $\ca'$, and a choice of values $(a_1^0,\ldots ,a_n^0)$ such that $a_1^0 \cdot \bar f_1+\cdots +a_n^0 \cdot\bar f_n=\bar 0$, where $f_j=\prod_{H_i\in\Pi'_j}\alpha_i$, $a_i^0\in \bc\setminus \{0\}$, and the bars denote that we are considering the restriction to $H_0$. This implies that $\alpha_0$ divides $a_1^0 \cdot  f_1+\cdots +a_n^0 \cdot f_n$  (being $\alpha_0$ the linear form that defines $H_0$).

Let us define the partition $\Pi=\Pi_1,\ldots,\Pi_{n+1}$ as follows: for each $1\leq i\leq n$, $\Pi_i=\Pi_i'$; and $\Pi_{n+1}=\{H_j\mid v_j \in \bc \cdot v_0\}$ (that is, we add to $\Pi'$ the subset of the erased hyperplanes).

This reasoning can be applied for each $H_j$ such that $v_j\in \bc\cdot v_0$. Lets see that the coefficients $a_i^j$ must coincide. Suppose that $v_1$ is proportional to $v_0$. The restriction $H_j\cap H_0$ is a hyperplane in $H_0$ that cannot go through any of the base points of the induced linear system, since in that case we would have $n+2$ hyperplanes intersecting. If $a_1^1 \cdot \bar f_1+\cdots +a_n^1 \cdot\bar f_n$ wouldn't be $\bar 0$, it would determine a hypersurface in $H_0$ that contains $H_1\cap H_0$ as irreducible component. Since $H_0\cap H_1$ is a hyperplane that does not go through any base point of the linear system, we would have a fibre of a linear system with an irreducible component that does not go through any of the base points. This would give a contradiction, and hence the vectors $(a_1^0,\ldots ,a_n^0)$ and $(a_1^1,\ldots ,a_n^1)$ must be proportional. Repeating this reasoning for all possible $H_j$ such that $v_j$ is proportional to $v_0$, we get that $\alpha_j$ must divide $a_1^0 \cdot f_1+\cdots +a_n^0 \cdot f_n$ for every $H_j\in\Pi_{n+1}$. Since they are coprime, the product of all those $\alpha_j$ divides $a_1^0 \cdot f_1+\cdots +a_n^0 \cdot f_n$. Now we will see that the degrees will coincide, and that would finish the proof.

First note that, in the case of INNC, the flag condition is equivalent to saying that at each point, the vectors corresponding to the hyperplanes that contain it either span a subspace of rank lower than $n$, or they add up to zero.

Take one hyperplane $H_i\in \Pi_i$ in one of each of the first $n-1$ elements of the partition $\Pi$. The intersection of all them is a line $l$. This line intersect each of the remaining hyperplanes in exactly one point. If we count the intersections with the hyperplanes of $\Pi_n$, we get that, in order for the flag condition to be satisfied, a hyperplane of $\Pi_{n+1}$ must also go through it. Since that would mean already $n+1$ hyperplanes going through that point, there cannot be any other hyperplane joining there, and we have a pairing between the hyperplanes in $\Pi_n$ and the hyperplanes in $\Pi_{n+1}$. That, together with the induction hypothesis, gives the result.

\end{proof}

Note that the proof does not use the hypothesis in all the points, just in some of them; and hence the theorem could be true for a wider class of arrangements. In order to apply the proof, we just need the points  $p$ where $\sum_{p\in H_i} v_i=0$ to be the intersection of exactly $n+1$ hyperplanes. This justifies the following example.

\begin{ejm}\label{ejemplosimple}
 Consider the arrangements in $\bc\bp^n$ of the form 
\[(x_1^k-x_2^k)\cdot (x_2^k-x_3^k)\cdot\cdots\cdot (x_{n}^k-x_{n+1}^k)\cdot(x_{n+1}^k-x_1^k).\]
 They can be decomposed as the union of curves of the form $(x_i^k-x_{i+1}^k)$, that decompose as a union of hyperplanes. Since the sum of all their defining equations is zero, these curves belong to a linear system of dimension $n-1$. The base points of this linear system is the set of points $\{[a_1:a_2:\cdots a_n:1\mid a_1^k=a_2^k=\cdots=a_n^k=1\}$; and exactly $n+1$ hyperplanes of the arrangement go through each of these points. Then, even though the hypothesis of the theorem do not hold in general (for instance, these arrangements are not INNC if $k>2$, and there will exist points with more than $n+1$ hyperplanes if $n>3$), the proof would work nevertheless. And the same would happen for any arrangement obtained by a small deformation of these arrangements that respects the intersections at the base points. Taking a generic such deformation (when it exists), we would obtain a hyperplane arrangement satisfying the hypothesis of the theorem.
\end{ejm}
 
Very few examples of hyperplane arrangements that are formed by union of $n+1$ or more fibres of a linear system of dimension $n-1$ are known to the author. The following example shows one case in which some hyperplanes belong to more than one fibre. In these cases, Conjecture~\ref{conjgen} also holds, but their nature is essentially different: the non-isolated non-normal crossings play an important role.

\begin{ejm}
 Consider the arrangements in $\bc\bp^n$ given by equations $x_1\cdots x_{n+1}\cdot(x_1^k-x_2^k)\cdot (x_2^k-x_3^k)\cdots (x_{n}^k-x_{n+1}^k)\cdot(x_{n+1}^k-x_1^k)$. We have that
\[
\sum_{i=1}^n x_1^k\cdots x_{i-1}^k\cdot x_{i+2}^k\cdots x_{n+1}^k\cdot (x_i^k-x_{i+1}^k)+
x_2^k\cdots x_n^k\cdot(x_{n+1}^k-x_1^k)=0,
\]
so the arrangement is a union of $n+1$ fibres of a linear system of dimension $n-1$. But in this case, we can find base components of positive dimension (just take the intersection of the coordinate hyperplanes corresponding to $x_i$ and $x_{i+1}$). These components are non-isolated non-normal crossings in our arrangements, and hence we cannot hope to deform the arrangement to obtain a new one with a simpler combinatorial structure, but maintaining the desired property, as we did in Example~\ref{ejemplosimple}.
\end{ejm}

\bibliographystyle{abbrv}
\bibliography{biblio}

\end{document}